\newcommand{\textd}{\text{d}}
\newcommand{\bbr}{\mathbb{R}}
\newcommand{\bfone}{\mathbf{1}}
\newcommand{\bbn}{\mathbb{N}}
\newcommand{\ep}{\epsilon}
\newcommand{\bbp}{\mathbb{P}}
\newcommand{\support}{\text{support}}
\DeclareMathOperator*{\argmax}{arg\,max}
\theoremstyle{plain}
\newtheorem{theorem}{Theorem}[section]
\newtheorem{proposition}[theorem]{Proposition}
\newtheorem{lemma}[theorem]{Lemma}
\newtheorem{corollary}[theorem]{Corollary}
\theoremstyle{definition}
\newtheorem{definition}[theorem]{Definition}
\newtheorem{example}[theorem]{Example}
\theoremstyle{remark}
\newtheorem{remark}[theorem]{Remark}
\newtheorem*{claim*}{Claim}
\newcommand{\barx}{\bar x}
\newcommand{\bary}{\bar y}
\begin{document}

\title[Lipschitz games]{Lipschitz games}
\author{Yaron Azrieli}\email{azrieli.2@osu.edu}\address{Department of Economics\\ The Ohio State University}
\author{Eran Shmaya}\email{e-shmaya@kellogg.northwestern.edu}\address{Kellogg School of Management\\ Northwestern University}
\subjclass[2010]{Primary: 91A10. Secondary: 05D40}
\keywords{Pure equilibrium, Purification, Lipschitz games, Large games.}
\thanks{Thanks
to Joyee Deb and Ehud Kalai for helpful discussions about large games, to Ezra Einy for informing us about the Shapley-Folkman Theorem, to Roger Myerson for Remark~\ref{meyerson}, to Ron Peretz for comments on a previous version of the manuscript, to Marcin Peski for suggesting the network games application, to David Schmeidler for suggesting the current proof of Theorem~\ref{thm-anonymous} (which is simpler than our original proof) and to Rakesh Vohra for informing us about Gale-Berlekamp Switching Game.} 
\begin{abstract}
The \emph{Lipschitz constant} of a finite normal--form game is the maximal change in some player's payoff when a single opponent changes his strategy. We prove that games with small Lipschitz constant admit pure $\ep$-equilibria, and pinpoint the maximal Lipschitz constant that is sufficient to imply existence of pure $\ep$-equilibrium as a function of the number of players in the game and the number of strategies of each player. Our proofs
use the probabilistic method.
\end{abstract}

\maketitle


\section{introduction}\label{sec-intro}
The use of mixed strategies in game theory is often criticized, since one is reluctant to view rational decisions as based on coin tossing (see, e.g., Rubinstein's discussion \cite[Section 3]{Rub-91}). Thus the long tradition in the literature of identifying classes of games that admit pure equilibrium \cite{Mil-Rob-90, Mon-Sha-96, Ros-73}. Pure Nash equilibria do not suffer from the conceptual difficulty of mixed strategies, which make this solution concept more appealing.

This paper identifies a new class of games that admit pure $\ep$-equilibrium. We call these games \emph{Lipschitz games}, since their characterizing property is that the payoff of a player does not change much when a single opponent changes his strategy; thus, each player's payoff function is Lipschitz in her opponents' strategy profile.
More precisely, we define the \emph{Lipschitz constant of a game} to be the maximal change in some player's payoff when a single opponent changes his strategy. 
If the Lipschitz constant is $o(1/n)$ when the number of players $n$ grows, then any game with sufficiently many players admits a pure $\ep$-equilibrium, since in this case a player's payoff is essentially independent of her opponents' strategy profile (Proposition \ref{sure}). We show in this paper that,
%
when the number of strategies for every player is fixed, a Lipschitz constant of $o(1/\sqrt{n\log n})$ is already sufficient to guarantee pure $\ep$-equilibrium (Theorem~\ref{yes}) and give an example for games with  Lipschitz constant of order $O(1/\sqrt{n})$ with no such equilibrium (Theorem~\ref{no}).

Our results give new perspective on the literature on large (with many players) games. Large games are used as a modeling apparatus that captures two intuitive aspects of interactions of many economic agents: Continuity and anonymity. Continuity means that an agent's behavior has only small impact on the utility of other agents. Anonymity means that each agent's utility depends only on the aggregate behavior of the other agents. A typical example is a congestion game with many players, such as drivers choosing among several roads: A driver's utility depends on the other drivers' choices only through the congestion they form on each road (anonymity), and each driver has a negligible impact on this congestion (continuity).

One of the earliest models of strategic interaction with many players, due to Schmeidler~\cite{Sch-73}, is games with a continuum of players: The set of players is given by a non--atomic measure space; continuity is reflected by the fact that opponent's strategy profiles which are equal almost everywhere yield the same payoff, and therefore a change in one opponent's strategy does not affect a player's payoff; and anonymity is reflected by the fact that opponents' strategy profile affect a player's payoff only through its integral. Schmeidler shows that pure Nash equilibrium exists under these assumptions.
Kalai~\cite{Kal-04} considers large finite games under continuity and anonymity assumptions: Roughly speaking, for the special case of complete information games with deterministic types, a player's payoff is a continuous function of her own strategy and of the empirical distribution of opponent's types and strategies. Kalai's result implies existence of pure $\ep$-equilibrium in such games with sufficiently many players. Several subsequent papers \cite{Car-08,Car-Pod-10-1, Deb-Kal-10, Gra-Rei-10} 
relax some of Kalai's assumptions and obtain similar results.

Note that the formulation of the continuity assumption in Kalai's paper already assumes anonymity. In this paper continuity is captured by the Lipschitz constant of the game, and is therefore not tied to anonymity. Kalai's argument is based on \emph{self-purification}: the realized profile of a mixed Nash equilibrium is, with high probability, a pure $\ep$-equilibrium, and it has been a folk knowledge that the argument is based on continuity alone. This paper pinpoints the limit behavior of the Lipschitz constant required for the self-purification argument to hold and for pure equilibrium to exist.


A common situation where the continuity assumption holds without the anonimity is network games~\cite[Sections 9.2-9.3]{Jac-08}. In these games players are identified with nodes of a graph, and the payoff of each player depends in a Lipschitz continuous way on the empirical distribution of strategies of her neighbors. These games are not anonymous, but it follows from our result that if the set of neighbors of each player is sufficiently large relative to the total number of players, then a pure approximate equilibrium exists.

Section~\ref{sec-setup} contains the necessary definitions and the main results. The proofs of the existence result and of its tightness are in Sections~\ref{sec-proof-yes} and~\ref{sec-proof-no}. In Section \ref{sec-strategy} we show that if the number of strategies of each player is unbounded, then it is not possible to improve the trivial bound of $o(1/n)$ to guarantee pure $\ep$-equilibrium existence. 
In Section \ref{sec-anonymous} we consider the special case of anonymous games and show that for such games the required Lipschitz constant is $o(1)$, that is independent of the number of players in the game, even though the self-purification argument fails. Appendix \ref{sec-converse} shows that existence of pure approximate equilibria in Lipschitz games implies Nash's theorem on existence of mixed equilibrium.
\section{Main results}\label{sec-setup}
An $n$--player game in normal form $G$ is given by finite sets $\{A_i\}_{i=1}^n$ of \emph{strategies} and by \emph{payoff functions} $\left\{f_i:A\rightarrow \mathbb{R} \right\}_{i=1}^n$, where $A=\prod_{i=1}^n A_i$ is the set of \emph{strategy profiles}. A \emph{mixed strategy} for player $i$ is a probability distribution over $A_i$. 
Nash equilibrium and $\ep$--Nash equilibrium (in pure or mixed strategies) are defined as usual.

For each $i$, we view the product space $A_{-i}=\prod_{j\neq i}A_j$ as a metric space, with the metric
$\rho(a_{-i}',a_{-i}'')=\#\{1\le j \le n ~\colon~ j \neq i,~ a'_j\neq a''_j\}$.
\begin{definition}\label{def-impact}
The \emph{Lipschitz constant} of $G$ is given by $$\delta(G)=\max\{|f_i(a_i,a_{-i}')-f_i(a_i,a_{-i}'')|\},$$
where the maximum ranges over all players $i$, all strategies $a_i\in A_i$ and all pairs $a_{-i}',a_{-i}''$ of opponents' strategy profiles such that $\rho(a_{-i}',a_{-i}'')=1$.
\end{definition}
Games with Lipschitz constant $\delta$ have the property that a player's payoff does not change by more than $\delta$ when one opponent changes his strategy. This implies that for each $i$ and each $a_i\in A_i$ the function $f_i(a_i, \cdot)$ is $\delta$--Lipschitz on $A_{-i}$. We denote by $L(n,m,\delta)$ the set of games with $n$ players, at most $m$ strategies for every player and Lipschitz constant of at most $\delta$.

\begin{remark}
A notion of `influence of a player' appears also in \cite{Aln-Smo-00} for a different purpose than in this paper. In our setup, the Lipschitz constant is a property of the game $G$, while in their setup the influence is a property of the distribution of players' types and of the mechanism.
\end{remark}

The following Proposition~\ref{sure} establishes a trivial sufficent condition for existence of pure $\ep$-equilibrium when the Lipschitz constant is so small that a player's payoff essentially depends only on her own strategy.
\begin{proposition}\label{sure}
Let $\ep>0$ and $m,n\in \bbn$. Then every game in $L(n,m,\delta)$ for $\delta = \ep/2n$ admits a pure $\ep$--equilibrium.
\end{proposition}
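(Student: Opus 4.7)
The plan is to exploit the fact that $\delta=\ep/(2n)$ is so small that any player's payoff varies by less than $\ep/2$ even if all $n-1$ opponents switch strategies simultaneously; hence a player can essentially ignore what the opponents do when choosing a best response. The construction of the equilibrium profile will therefore be by direct optimization against a fixed reference profile, and approximate optimality against the resulting profile will follow by applying the Lipschitz bound twice.

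Concretely, I would proceed as follows. First, fix any reference strategy profile $\bar a\in A$. For each player $i$, let
\[
 a_i^\ast \in \argmax_{a_i\in A_i} f_i(a_i,\bar a_{-i}),
\]
and let $a^\ast=(a_1^\ast,\ldots,a_n^\ast)$. This defines a pure profile whose construction makes no reference to the other coordinates of $a^\ast$, so there is nothing circular.

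Next, I would verify that $a^\ast$ is a pure $\ep$-equilibrium. Fix a player $i$ and a deviation $a_i'\in A_i$. Since $a_{-i}^\ast$ and $\bar a_{-i}$ differ in at most $n-1$ coordinates, iterating the one--coordinate Lipschitz bound of Definition~\ref{def-impact} along a path of single-coordinate changes yields
\[
 |f_i(a_i,a_{-i}^\ast)-f_i(a_i,\bar a_{-i})|\le (n-1)\delta
\]
for every $a_i\in A_i$. Applying this with $a_i=a_i^\ast$ and with $a_i=a_i'$, and then using the defining optimality of $a_i^\ast$ against $\bar a_{-i}$, I obtain
\[
 f_i(a_i^\ast,a_{-i}^\ast)\ge f_i(a_i^\ast,\bar a_{-i})-(n-1)\delta
 \ge f_i(a_i',\bar a_{-i})-(n-1)\delta
 \ge f_i(a_i',a_{-i}^\ast)-2(n-1)\delta.
\]
Since $2(n-1)\delta<2n\delta=\ep$, this is the pure $\ep$-equilibrium inequality.

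There is essentially no hard step here: the only thing to be careful about is the path argument that bounds $|f_i(a_i,a_{-i}^\ast)-f_i(a_i,\bar a_{-i})|$ by $(n-1)\delta$, but this follows immediately by changing the opponents' strategies one at a time and applying Definition~\ref{def-impact} to each single-coordinate change. The constant $\ep/2n$ in the statement is precisely what makes the factor $2(n-1)\delta$ strictly less than $\ep$.
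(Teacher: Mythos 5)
Your proposal is correct and is essentially identical to the paper's proof: the paper also fixes an arbitrary reference profile, lets each player best-respond to it, and applies the one-coordinate Lipschitz bound twice (each contributing $(n-1)\delta$) to conclude that $2(n-1)\delta<\ep$ gives a pure $\ep$-equilibrium. No meaningful difference in approach or detail.
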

\begin{proof}
Let $a\in A$ be an arbitrary strategy profile and let $a^\ast$ be a strategy profile such that $a^\ast_i$ is a best response to $a_{-i}$ for every player $i$. Then $a^\ast$ is an $\ep$--equilibrium. Indeed, for every player $i$ and every deviation $d\in A_i$ one has
\begin{multline*}
f_i(d,a^\ast_{-i})\leq f_i(d,a_{-i})+(n-1)\ep/2n\leq\\ f_i(a^\ast_i,a_{-i})+(n-1)\ep/2n\leq f_i(a^\ast_i,a^\ast_{-i})+2(n-1)\ep/2n<f_i(a^\ast_i,a^\ast_{-i})+\ep,
\end{multline*}
where the first and third inequalities follow from the Lipschitz property of $f_i$ and the second inequality follows from the definition of $a^\ast$.
\end{proof}
\begin{corollary}Let $\delta:\bbn\rightarrow [0,1]$ be such that $\delta(n)=o(1/n)$. Then for every $\ep>0$, there is $N$ such that every game with $n\ge N$ players and Lipschitz constant smaller than $\delta(n)$ admits a pure $\ep$-equilibrium.\end{corollary}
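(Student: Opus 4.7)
The plan is to deduce the corollary directly from Proposition \ref{sure} by turning the asymptotic assumption $\delta(n) = o(1/n)$ into the explicit bound $\delta(n) < \ep/(2n)$ required by the proposition, for all sufficiently large $n$.

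First I would unpack the little-$o$ notation: $\delta(n) = o(1/n)$ means exactly that $n\,\delta(n) \to 0$ as $n \to \infty$. Given any $\ep > 0$, I can therefore pick $N$ large enough that $n \ge N$ implies $n\,\delta(n) < \ep/2$, or equivalently $\delta(n) < \ep/(2n)$. Note that the choice of $N$ depends on $\ep$ and on the sequence $\delta$, but not on the number of strategies per player, which is important because the statement places no bound on $m$.

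Next, fix any $n \ge N$ and consider an arbitrary game $G$ with $n$ players whose Lipschitz constant is smaller than $\delta(n)$. Writing $m$ for the maximum number of strategies of any single player in $G$, we have $G \in L(n, m, \ep/(2n))$ by the inequality just derived. Proposition \ref{sure} applied to this game (with the given $\ep$ and $m$) yields a pure $\ep$-equilibrium, which is exactly what the corollary asserts.

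I do not foresee any obstacle, since the entire content of the corollary is the observation that the quantitative bound in Proposition~\ref{sure} is eventually satisfied under the hypothesis $\delta(n) = o(1/n)$; the only thing to be careful about is that the threshold $N$ be chosen uniformly in $m$, which it is because the bound $\ep/(2n)$ provided by Proposition~\ref{sure} itself does not depend on $m$.
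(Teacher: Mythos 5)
Your proposal is correct and is exactly the argument the paper intends (the corollary is stated without proof precisely because it follows immediately from Proposition~\ref{sure} in this way). Your observation that the threshold $N$ can be chosen uniformly in $m$, since the bound $\ep/2n$ in Proposition~\ref{sure} does not depend on the number of strategies, is the right point to make explicit.
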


Since our main interest is in games with many players, we would like to think of the size of the strategy set as fixed and increase the number of players to infinity. Theorem~\ref{yes} establishes existence of pure approximate equilibrium in games with sufficiently small Lipschitz constant, which is asymptotically much larger than the Lipschitz constant in Proposition~\ref{sure}. Theorem~\ref{no} shows the almost tightness of Theorem~\ref{yes}.
\begin{theorem}\label{yes}
Let $\ep>0$ and $m,n\in \mathbb{N}$. Then every game in $L(n,m,\delta)$ for
$\delta = \ep/\sqrt{8n\log (2mn)}$ admits a pure $\ep$--equilibrium.
\end{theorem}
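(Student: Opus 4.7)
The plan is to apply the probabilistic method (self-purification): start with a mixed Nash equilibrium $\sigma=(\sigma_1,\dots,\sigma_n)$, whose existence is guaranteed by Nash's theorem, draw a realization $a=(a_1,\dots,a_n)$ with the $a_i$ independent and $a_i\sim\sigma_i$, and show that with positive probability this pure profile is an $\ep$--equilibrium. The crucial observation is that for each player $i$ and each pure deviation $b\in A_i$, the map $a_{-i}\mapsto f_i(b,a_{-i})$ is $\delta$--Lipschitz with respect to the Hamming metric, so it is a function of $n-1$ independent coordinates with bounded differences at most $\delta$.

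First I would let $v_i=\bbe_{a'\sim\sigma}[f_i(a')]$ denote the equilibrium payoff, and recall the standard fact that $\bbe_{a_{-i}\sim\sigma_{-i}}[f_i(c,a_{-i})]=v_i$ for every $c$ in the support of $\sigma_i$, while $\bbe_{a_{-i}\sim\sigma_{-i}}[f_i(b,a_{-i})]\le v_i$ for any $b\in A_i$. Then I would apply McDiarmid's bounded-differences inequality to obtain, for every $i$ and every $b\in A_i$,
\[
\bbp\bigl(f_i(b,a_{-i})-\bbe[f_i(b,a_{-i})]\ge t\bigr)\le \exp\!\bigl(-2t^2/((n-1)\delta^2)\bigr),
\]
and similarly, by conditioning on $a_i=c$ in the support of $\sigma_i$ and using independence of $a_{-i}$ from $a_i$,
\[
\bbp\bigl(f_i(a_i,a_{-i})\le v_i-t\bigr)\le \exp\!\bigl(-2t^2/((n-1)\delta^2)\bigr).
\]

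Next I would union-bound over the $n\cdot m$ pairs $(i,b)$ for the first event and over the $n$ players for the second. Combining both, with probability at least $1-2mn\exp(-2t^2/(n\delta^2))$, one has $f_i(b,a_{-i})\le v_i+t$ and $f_i(a_i,a_{-i})\ge v_i-t$ simultaneously for all $i$ and all $b\in A_i$, which yields $f_i(b,a_{-i})-f_i(a_i,a_{-i})\le 2t$, i.e.\ $a$ is a pure $2t$--equilibrium. Choosing $t=\ep/2$, the failure probability is at most $2mn\exp(-\ep^2/(2n\delta^2))$, and one checks that with $\delta=\ep/\sqrt{8n\log(2mn)}$ this quantity is strictly less than $1$ (in fact this gives factor-of-two slack, since $\delta=\ep/\sqrt{2n\log(2mn)}$ would already suffice).

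The only substantive step is the concentration inequality; everything else is bookkeeping. The mild subtlety is handling the randomness of the player's own strategy $a_i$: one must condition on $a_i$, note that opponents' independence is preserved, and exploit the equilibrium identity $\bbe[f_i(c,a_{-i})]=v_i$ on the support of $\sigma_i$ so that the concentration bound about $v_i$ automatically applies to the realized payoff $f_i(a_i,a_{-i})$. Once this is observed, the calculation is a direct bounded-differences plus union-bound argument.
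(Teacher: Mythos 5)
Your proposal is correct and follows essentially the same self-purification argument as the paper: sample from a mixed Nash equilibrium, apply a bounded-differences concentration inequality to the $\delta$-Lipschitz payoff functions, union-bound over roughly $mn$ events, and close the deviation chain via the equilibrium (in)equalities for expected payoffs. The only differences are cosmetic: you use one-sided tails with explicit conditioning on the realized own strategy where the paper uses two-sided events $E_{i,h}$ indexed by \emph{all} strategies $h$ together with support membership, and you invoke McDiarmid's constant $e^{-2t^2/(n-1)\delta^2}$ rather than the paper's $e^{-r^2/2(n-1)\delta^2}$ bound, which is exactly why you observe the factor-of-two slack in $\delta$.
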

\begin{corollary}\label{yes-corollary}
Fix $m$ and let $\delta:\bbn\rightarrow [0,1]$ be such that $\delta(n)=o(1/\sqrt{n\log n})$. Then for every $\ep>0$, there is $N$ such that every game with $n\ge N$ players, $m$ strategies for each player and Lipschitz constant smaller than $\delta(n)$ admits a pure $\ep$-equilibrium.
\end{corollary}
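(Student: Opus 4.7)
My plan is to prove this via \emph{self-purification}: start with a mixed Nash equilibrium $\sigma = (\sigma_1,\dots,\sigma_n)$ (which exists by Nash's theorem), draw a realized profile $a = (a_1,\dots,a_n)$ with the $a_i$ independent and $a_i \sim \sigma_i$, and show that with positive probability $a$ is already a pure $\ep$-equilibrium. The intuition is that because the Lipschitz constant $\delta$ is small, the random payoff $f_i(d, a_{-i})$ is sharply concentrated around its mean, so a realized profile ``looks like'' a mixed profile for every fixed deviation.

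The key step is concentration. For each player $i$ and each pure strategy $d \in A_i$, write $u_i(d) = \mathbb{E}_{a_{-i}\sim \sigma_{-i}}[f_i(d,a_{-i})]$. Because $f_i(d,\cdot)$ changes by at most $\delta$ when any single coordinate of $a_{-i}$ is altered, and because the coordinates of $a_{-i}$ are independent, McDiarmid's bounded-differences inequality yields
\begin{equation*}
\Pr\left[\,|f_i(d, a_{-i}) - u_i(d)| > \ep/2\,\right] \;\le\; 2\exp\!\left(-\frac{2(\ep/2)^2}{(n-1)\delta^2}\right) \;\le\; 2\exp\!\left(-\frac{\ep^2}{2n\delta^2}\right).
\end{equation*}
Plugging in $\delta^2 = \ep^2/(8n\log(2mn))$ makes the exponent equal $-4\log(2mn)$, so each such event has probability at most $2/(2mn)^4$. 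A union bound over the at most $nm$ pairs $(i,d)$ shows that with probability strictly greater than zero the event
\begin{equation*}
\calf = \{\,|f_i(d,a_{-i})-u_i(d)|\le \ep/2 \text{ for every } i \text{ and every } d\in A_i\,\}
\end{equation*}
holds. In particular there exists a realization $a$ in $\calf$.

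It remains to verify that any $a \in \calf$ is a pure $\ep$-equilibrium. Since $a_i \in \support(\sigma_i)$, the Nash property gives $u_i(a_i) = u_i(\sigma_i) \ge u_i(d)$ for every deviation $d \in A_i$. Combining this with the two one-sided concentration bounds supplied by $\calf$,
\begin{equation*}
f_i(d, a_{-i}) \;\le\; u_i(d) + \ep/2 \;\le\; u_i(a_i) + \ep/2 \;\le\; f_i(a_i,a_{-i}) + \ep,
\end{equation*}
so no player can gain more than $\ep$ by deviating, which is what we wanted.

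I do not anticipate a conceptual obstacle; the only delicate point is checking that the constants match the statement of the theorem, i.e.\ that the choice $\delta = \ep/\sqrt{8n\log(2mn)}$ is exactly what is needed for McDiarmid plus the $nm$-term union bound to leave room for $\calf$ to have positive probability. The $\log$ factor in the denominator is precisely the price paid for the union bound, which is the essence of why $1/\sqrt{n\log n}$ (rather than $1/\sqrt{n}$) appears in Corollary~\ref{yes-corollary}.
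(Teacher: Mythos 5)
Your proof is correct and follows essentially the same route as the paper: the paper derives this corollary from Theorem~\ref{yes}, whose proof is exactly your self-purification argument (realize a mixed Nash equilibrium, apply a bounded-differences concentration inequality to each pair $(i,d)$, union bound over the $nm$ pairs, and pick a profile in the support lying in the good event). The only cosmetic difference is that you invoke McDiarmid's inequality where the paper cites Ledoux's concentration result (Proposition~\ref{liphshitz-prop}); both give the needed bound with the stated constant $\delta=\ep/\sqrt{8n\log(2mn)}$.
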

\begin{remark}
Deb and Kalai's \cite{Deb-Kal-10} result implies that any game with Lipschitz constant of order $O(1/n)$ admits a pure $\ep$-equilibrium.\end{remark}
\begin{theorem}\label{no}
For every even $n$ large enough there is a game in $L(n,2,60/\sqrt n)$ with no pure 1/3--equilibrium. Moreover, the payoffs can be chosen to be bounded in $[-1,1]$.
\end{theorem}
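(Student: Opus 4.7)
The plan is to exhibit a bipartite construction inspired by the Gale--Berlekamp switching game. Set $m = n/2$ and partition the $n$ players into $m$ ``row'' players and $m$ ``column'' players, each with strategies identified with $\{+1,-1\}$. Given a matrix $A \in \{-1,+1\}^{m \times m}$, take the payoffs
\[
f_i^R(r,c) = \pi\!\left(\tfrac{30}{\sqrt n}\, r_i\textstyle\sum_j A_{ij} c_j\right), \qquad f_j^C(r,c) = \pi\!\left(-\tfrac{30}{\sqrt n}\, c_j\textstyle\sum_i A_{ij} r_i\right),
\]
where $\pi$ is truncation to $[-1,1]$. Each payoff lies in $[-1,1]$, and flipping a single opponent's strategy changes the argument of $\pi$ by exactly $\pm 60/\sqrt n$ (and flipping another row player's strategy does not affect $f_i^R$ at all), so the game lies in $L(n,2,60/\sqrt n)$.

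The next step is a necessary condition on $(r,c)$ for it to be a $1/3$-equilibrium. Because $f_i^R$ is odd in $r_i$ and $f_j^C$ is odd in $c_j$, the $1/3$-best-response conditions collapse to $f_i^R \ge -1/6$ and $f_j^C \ge -1/6$. A short analysis of the linear and clipped regimes of $\pi$ shows these are equivalent to $r_i X_i(c) \ge -\sqrt n/180$ for every $i$ and $c_j Y_j(r) \le \sqrt n/180$ for every $j$, where $X_i(c) = \sum_j A_{ij} c_j$ and $Y_j(r) = \sum_i A_{ij} r_i$. Summing the column conditions gives $\sum_{ij} A_{ij} r_i c_j \le m\sqrt n/180$; combined with the bounds $(r_i X_i)_- \le \sqrt n/180$ from the row conditions and the identity $\sum_i r_i X_i = \sum_{ij} A_{ij} r_i c_j$, this yields the key inequality
\[
\|Ac\|_1 \;=\; \sum_i |X_i(c)| \;\le\; 3m\sqrt n/180 \;=\; n^{3/2}/120.
\]
Hence to make the game free of $1/3$-equilibria it suffices to find $A$ with $\min_c \|Ac\|_1 > n^{3/2}/120$.

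This is accomplished via the probabilistic method: let the entries $A_{ij}$ be iid uniform in $\{-1,+1\}$. For any fixed $c$, the entries $(Ac)_i$ are iid with the distribution of a sum of $m$ iid $\pm 1$'s, so $\bbe \|Ac\|_1 = m\,\bbe|B_m| \sim \sqrt{2/\pi}\,m^{3/2}$, which dwarfs the target $n^{3/2}/120 \approx 0.024\,m^{3/2}$, and the variance of each summand is $m(1-2/\pi)$. The hard part will be producing a concentration inequality sharp enough to beat the $2^m$ union bound over $c$: the naive Hoeffding or McDiarmid bounds, which use only the trivial envelope $|B_m|\le m$, give an exponent of order $m$ with a constant far below $\log 2$. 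To close the gap I would either apply Bernstein's inequality after truncating each $|(Ac)_i|$ at a threshold $O(\sqrt{m \log m})$ (above which $|B_m|$ is extremely unlikely), or directly bound the exponential moment $\bbe e^{-\lambda |B_m|}$, obtaining a tail bound $\bbp(\|Ac\|_1 \le n^{3/2}/120) \le e^{-\beta m}$ with $\beta > \log 2$ (the calculation yields $\beta \approx 0.82$, comfortably beating $\log 2 \approx 0.693$). A union bound over the $2^m$ choices of $c$ then gives $\bbp(\min_c \|Ac\|_1 \le n^{3/2}/120) \le 2^m e^{-\beta m} \to 0$, so for every sufficiently large even $n$ a suitable $A$ exists and the corresponding game in $L(n,2,60/\sqrt n)$ has no pure $1/3$-equilibrium.
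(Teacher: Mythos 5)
Your game is the paper's game exactly (the truncated matching--pennies/Gale--Berlekamp construction, with the same constants), and your overall skeleton --- probabilistic method plus a union bound over the $2^{n/2}$ profiles of one side --- is also the paper's. The two inner steps, however, are executed differently. For the reduction, the paper fixes a female profile, uses the zero-sum identity of the untruncated payoffs and the truncation to show that if all males $1/3$-best-respond then $\sum_j v_j > k/6$, so some female's untruncated payoff is below $-1/6$; the combinatorial input it needs is Lemma~\ref{thelemma}: for every $\pm1$ vector $\bar x$, more than $k/3$ coordinates of $\bar x M$ exceed $\sqrt k/20$ in absolute value. You instead sum the best-response inequalities of both sides and use the identity $\sum_i r_iX_i=\sum_j c_jY_j$ to reduce to the discrepancy-type statement $\min_c\|Ac\|_1>n^{3/2}/120$. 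Your reduction is correct, and your combinatorial target is (up to constants) weaker than the paper's lemma, which is appealing; but the price is paid in the probabilistic step. The paper only needs a binomial tail: each column is ``balanced'' with probability $<1/25$ (CLT), and Chernoff for the number of balanced columns gives $2^{-k}$ per profile. You must instead prove lower-tail concentration for $\|Ac\|_1$, a sum of $m$ i.i.d.\ copies of $|B_m|$ with range $[0,m]$, which is exactly the difficulty you correctly identify.

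Here is the gap: your first proposed remedy, Bernstein after truncating each summand at $M'=O(\sqrt{m\log m})$, does not deliver exponent $\Omega(m)$. With deviation $t\approx 0.77\,m^{3/2}$, the Bernstein denominator is dominated by $M't/3\approx 0.26\,m^2\sqrt{\log m}$, so the exponent is of order $m/\sqrt{\log m}=o(m)$ and loses to the $2^m$ union bound (Hoeffding at that truncation level is no better: $2t^2/(mM'^2)=\Theta(m/\log m)$). The repair is to truncate at $c\sqrt m$ for a \emph{small constant} $c$: with $W_i=\min\{|(Ac)_i|,\,0.3\sqrt m\}$ one has $\bbe W_i\to \bbe\min\{|N|,0.3\}\cdot\sqrt m\approx 0.26\sqrt m$, and plain Hoeffding on variables in $[0,0.3\sqrt m]$ gives exponent about $2(0.26-0.024)^2/0.09\approx 1.3$ per player, beating $\log 2$. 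Your second remedy is sound as stated: taking $\lambda=c/\sqrt m$ with $c$ fixed, $\bbe e^{-c|B_m|/\sqrt m}\to 2e^{c^2/2}\Phi(-c)$ (where $\Phi$ is the standard normal distribution function), and optimizing the resulting exponent $c\theta+\log\bigl(2e^{c^2/2}\Phi(-c)\bigr)$ over $c$ gives $\beta\approx 3$, far above $\log 2$; so your conservative $\beta\approx 0.82$ is more than enough. In short, your proof is complete via the exponential-moment branch; the Bernstein-with-$\sqrt{m\log m}$-truncation branch should be dropped or repaired as above.
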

\begin{remark}\label{meyerson}One can define the Lipschitz constant of a game in a slightly different way, to get a stronger result: For every game $G$ let
\[\eta(G)=\max\{|\left(f_i(a_i',a_{-i}')-f_i(a_i'',a_{-i}')\right)-\left(f_i(a_i',a_{-i}'')-f_i(a_i'',a_{-i}'')\right)|\}\]
where the maximum ranges over all players $i$, all strategies $a_i',a_i''\in A_i$ and all pairs $a_{-i}',a_{-i}''$ of opponents' strategy profiles such that $\rho(a_{-i}',a_{-i}'')=1$. Then $\eta(G)\leq 2\delta(G)$ so that if $\delta(G)$ is small then so is $\eta(G)$, but $\eta(G)$ may be small even for games with large Lipschitz constant. Still, Theorem~\ref{yes} has the following simple corollary:\end{remark}
\begin{corollary}
Let $\ep>0$ and $m,n\in \mathbb{N}$. Then every game $G$ with $n$ players, at most $m$ strategies per player and with $\eta(G)\le \ep/\sqrt{8n\log (2mn)}$ admits a pure $\ep$--equilibrium.\end{corollary}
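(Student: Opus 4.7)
The strategy is to reduce to Theorem~\ref{yes} by subtracting from each player's payoff a function that depends only on her opponents' strategies; such a subtraction leaves best-response comparisons (and hence the set of pure $\ep$-equilibria) unchanged, while possibly shrinking the Lipschitz constant all the way down to $\eta(G)$.

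Concretely, for each player $i$ pick an arbitrary reference strategy $a_i^0\in A_i$ and define a new game $G'$ on the same strategy sets by
\[
g_i(a_i,a_{-i}) \;=\; f_i(a_i,a_{-i}) - f_i(a_i^0,a_{-i}).
\]
The second term is independent of $a_i$, so for every $a_{-i}$ and every pair $a_i,d\in A_i$ we have $g_i(d,a_{-i})-g_i(a_i,a_{-i}) = f_i(d,a_{-i})-f_i(a_i,a_{-i})$. Consequently $a$ is a pure $\ep$-equilibrium of $G$ if and only if it is a pure $\ep$-equilibrium of $G'$, so it suffices to find a pure $\ep$-equilibrium in $G'$.

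Next I would estimate $\delta(G')$ in terms of $\eta(G)$. For any player $i$, any $a_i\in A_i$ and any pair $a_{-i}',a_{-i}''$ with $\rho(a_{-i}',a_{-i}'')=1$,
\[
|g_i(a_i,a_{-i}')-g_i(a_i,a_{-i}'')|
= \bigl|\bigl(f_i(a_i,a_{-i}')-f_i(a_i,a_{-i}'')\bigr)-\bigl(f_i(a_i^0,a_{-i}')-f_i(a_i^0,a_{-i}'')\bigr)\bigr|,
\]
which is at most $\eta(G)$ by the very definition of $\eta(G)$ (applied with $a_i'=a_i$ and $a_i''=a_i^0$). Hence $\delta(G')\le \eta(G)\le \ep/\sqrt{8n\log(2mn)}$, and since $G'$ still has $n$ players and at most $m$ strategies per player, Theorem~\ref{yes} produces a pure $\ep$-equilibrium in $G'$, which by the reduction is also one in $G$.

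There is no real obstacle here: the content of the corollary is simply the observation that Theorem~\ref{yes} is invariant under adding to each $f_i$ a function of $a_{-i}$ alone, and $\eta(G)$ is exactly the Lipschitz constant of the payoff after this invariance has been used to eliminate the $a_{-i}$-only part.
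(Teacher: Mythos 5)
Your proof is correct and is essentially identical to the paper's own argument: the paper also defines $g_i(a)=f_i(a)-f_i(\bar a_i,a_{-i})$ for an arbitrary reference strategy $\bar a_i$, observes that $\delta(G')\le\eta(G)$, applies Theorem~\ref{yes} to $G'$, and notes that any pure $\ep$-equilibrium of $G'$ is one of $G$. Your write-up merely spells out the two verifications (the bound on $\delta(G')$ and the equivalence of $\ep$-equilibria) that the paper leaves implicit.
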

\begin{proof}Let $G$ be a game with $\eta(G)\le \ep/\sqrt{8n\log (2mn)}$. For every player $i$ let $\bar a_i\in A_i$ and let $g_i:A\rightarrow \bbr$ be the payoff functions given by $g_i(a)=f_i(a)-f_i(\bar a_i,a_{-i})$. Let $G'$ be the game with action sets $A_i$ and payoff functions $g_i$. Then $\delta(G')\le \eta(G)$ so that $G'$ has a pure $\ep$-equilibrium by Theorem~\ref{yes}, but every such an equilibrium is also an $\ep$-equilibrium in $G$.\end{proof}

\section{Proof of Theorem~\ref{yes}}\label{sec-proof-yes}
The proof follows Kalai's \cite{Kal-04} proof for anonymous games. The gist of the proof is the observation that a random realized strategy profile of a mixed Nash equilibrium is, with high probability, an $\ep$-equilibrium. Kalai dubs this phenomena \emph{self--purification}. In Kalai's anonymous setup, the argument relies on the law of large numbers. Here we use the related fact that the value of a Lipschitz function under a random input is concentrated around its expectation. A similar argument appears in Deb and Kalai~\cite{Deb-Kal-10} and Carmona and Podczeck~\cite{Car-Pod-10-1}.
\begin{proposition}\cite[Corollary 1.17]{Led-01}\label{liphshitz-prop}
Let $A_1,\dots,A_n$ be finite sets and let $\mu=\mu_1\times\dots\times\mu_n$ be a product probability measure over $A = \prod_i A_i$. Let $F:A\rightarrow\bbr$ be a real valued function such that $|F(a)-F(a')|\leq 1$ for every $a,a'\in A$ such that $\rho(a,a')=1$. Then for every $r>0$
\[\mu \left(\left\{a\colon F(a)\geq \int F\textd \mu + r\right\}\right) \leq e^{-r^2/2n}.\]
\end{proposition}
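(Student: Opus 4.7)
The plan is to prove this as an instance of the standard bounded-differences concentration inequality (McDiarmid's inequality) via a Doob martingale argument. Draw $X=(X_1,\ldots,X_n)$ from $\mu$, set $\calf_i = \sigma(X_1,\ldots,X_i)$, and form the Doob martingale $M_i = \bbe[F(X) \mid \calf_i]$, so that $M_0 = \int F\, \textd\mu$ and $M_n = F(X)$. Writing $F(X) - \int F\, \textd\mu = \sum_{i=1}^n D_i$ with $D_i = M_i - M_{i-1}$, the task reduces to controlling the upper tail of this telescoping sum.

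The core step is to show $|D_i| \leq 1$ pointwise. Since $\mu$ is a product measure, the conditional expectation is an ordinary integral: $M_i = g_i(X_1,\ldots,X_i)$, where $g_i(x_1,\ldots,x_i) = \int F(x_1,\ldots,x_i,y_{i+1},\ldots,y_n)\, \textd(\mu_{i+1}\times\cdots\times\mu_n)(y)$. The Lipschitz hypothesis on $F$ lets one pull the pointwise bound $|F - F'| \leq 1$ through the integral to give $|g_i(x_1,\ldots,x_{i-1},x_i) - g_i(x_1,\ldots,x_{i-1},x_i')| \leq 1$ for any $x_i, x_i' \in A_i$. Since $M_{i-1}$ is the $\mu_i$-average of $g_i(X_1,\ldots,X_{i-1},\,\cdot\,)$, the difference $D_i$ takes values in an interval of length at most $2$, with conditional mean zero, given $\calf_{i-1}$.

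The remainder is standard Chernoff bookkeeping. Hoeffding's lemma applied conditionally gives $\bbe[e^{\lambda D_i} \mid \calf_{i-1}] \leq e^{\lambda^2/2}$. Iterating conditioning yields $\bbe\bigl[\exp\bigl(\lambda(F(X) - \int F\, \textd\mu)\bigr)\bigr] \leq e^{n\lambda^2/2}$, after which Markov's inequality produces $\mu(F \geq \int F\, \textd\mu + r) \leq e^{-\lambda r + n\lambda^2/2}$. Optimizing at $\lambda = r/n$ gives exactly the claimed bound $e^{-r^2/(2n)}$.

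The main obstacle, if one can call it that, is simply recognizing the correct packaging: the coordinate-Lipschitz hypothesis on $F$ is precisely the bounded-differences condition, and the product structure of $\mu$ is what makes the Doob martingale accessible to Hoeffding's lemma. Once the martingale differences are pinned down, everything else is mechanical. Since the Proposition is quoted from Ledoux, one would expect the paper itself to use the statement as a black box rather than reproduce this argument.
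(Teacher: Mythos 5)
Your proof is correct, and it is worth noting that the paper itself contains no proof of this proposition at all: it is imported as a black box from Ledoux (Corollary 1.17 of \cite{Led-01}), exactly as you anticipated in your closing remark. So your martingale argument is not an alternative to the paper's argument but a self-contained substitute for the citation. The route you take---Doob martingale $M_i = \bbe[F(X)\mid \calf_i]$, bounded increments via the product structure and the coordinate-Lipschitz hypothesis, conditional Hoeffding lemma, Chernoff optimization at $\lambda = r/n$---is the standard Azuma--Hoeffding/McDiarmid derivation, and every step checks out: the product structure of $\mu$ is genuinely needed to write $M_i$ as an integral over the remaining coordinates, and your bound $|D_i|\le 1$ follows correctly by pulling the pointwise Lipschitz bound through that integral. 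One small refinement you left on the table: conditionally on $\calf_{i-1}$, the increment $D_i$ is $g_i(X_{1},\dots,X_{i-1},X_i)$ minus its own $\mu_i$-average, and since $g_i(X_1,\dots,X_{i-1},\cdot)$ has oscillation at most $1$ over $A_i$, the increment actually lives in an interval of length $1$, not $2$; Hoeffding's lemma then gives $\bbe[e^{\lambda D_i}\mid\calf_{i-1}]\le e^{\lambda^2/8}$ and the sharper tail $e^{-2r^2/n}$. Your looser bookkeeping still yields exactly the constant $e^{-r^2/2n}$ claimed in the statement, which is all that the application in Section~\ref{sec-proof-yes} requires, so nothing is lost for the paper's purposes.
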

\begin{remark}\label{rem-lipshitz}\begin{enumerate}
\item If the Lipschitz constant for $F$ is $\delta$ (instead of 1 as in the above formulation) then by considering the function $F/\delta$ the bound on the probability becomes $e^{-r^2/2n\delta^2}$.\\
\item By applying the same bound to $-F$ one gets
\[\mu \left(\left\{a\colon \left| F(a)-\int F\textd \mu \right| \geq r\right\}\right) \leq 2e^{-r^2/2n}.\]\end{enumerate}
\end{remark}
Consider a game in $L(n,m,\delta)$ with $\delta = \ep/\sqrt{8n\log(2mn)}$. Let $(\mu_1,\dots,\mu_n)$ be a mixed strategy Nash equilibrium of the game. Thus, each $\mu_i$ is a probability distribution over $A_i$ and
\begin{equation}\label{nash}
\support(\mu_i)\subseteq\argmax_{a_i\in A_i}\int f_i(a_i,\tau)\mu_{-i}(\textd \tau),\end{equation}
where $\mu_{-i}=\prod_{j\neq i}\mu_j$.

For every player $i$ and every strategy $h\in A_i$ let $E_{i,h}\subseteq A$ be the set of all strategy profiles $a$ such that, if player $i$ plays $h$ against $a_{-i}$ her payoff is roughly the same as her expected payoff when she plays $h$ and the opponents play their Nash equilibrium strategy:
\[E_{i,h}= A_i \times \left\{a_{-i}\in A_{-i} \colon \left\lvert f_i(h,a_{-i})-\int f_i(h,\tau)\mu_{-i}(\textd \tau)\right\rvert\leq \ep/2 \bigr.\right\}.\]
From Proposition~\ref{liphshitz-prop}, the Lipschitz property of $f_i(h,\cdot)$ and the choice of $\delta$ it follows that
\[\mu(E_{i,h}^c)\leq 2\exp(-\ep^2/8(n-1)\delta^2)<1/nm\] for every player $i$ and every $h\in A_i$. Since there are at most $mn$ such events $E_{i,h}$, it follows that $\mu(\cap E_{i,h})>0$. Let $a^\ast$ be a strategy profile such that $a^\ast\in\support(\mu)$ and $a^\ast\in \cap E_{i,h}$. We claim that $a^\ast$ is an $\ep$--equilibrium. Indeed, for every player $i$ and every deviation $d\in A_i$ one has
\[f_i(d,a^\ast_{-i})\leq \int f_i(d,\tau)\mu_{-i}(\textd \tau)+\ep/2\leq \int f_i(a^\ast_i,\tau)\mu_{-i}(\textd \tau)+\ep/2\leq f_i(a^\ast_i,a^\ast_{-i})+\ep\]
where the first inequality follows from the fact that $a^\ast\in E_{i,d}$, the second from~(\ref{nash}) and the third from the fact that $a^\ast\in E_{i,a^\ast_i}$.

%
\section{Proof of Theorem \ref{no}}\label{sec-proof-no}
Let the number of players be $n=2k$ and let the set of strategies for each player be $\{+1,-1\}$. We divide the players into two groups of $k$ players, females and males, and denote their strategy profiles by $\bar x=(x_1,\dots,x_k)$ and $\bar y = (y_1,\dots,y_k)$ respectively, viewed as row vectors. Fix some constant $\delta>0$. We consider games that can be described by a $k\times k$ matrix $M=\{m_{ij}\}$ with entries $\pm 1$.
The payoff for female $i$ is $K(u_i)$ where $K(t)=t$ for $|t|\leq 1$ and $K(t)=t/|t|$ for $|t|>1$, and $u_i$, the \emph{untruncated payoff of female $i$}, is given by
\[u_i (\bar x,\bar y) = \delta x_i\sum_j m_{ij}y_j = \delta x_i\cdot (M\bar y^T)_i.\]
The payoff for male $j$ is given by $K(v_j)$ where the untruncated payoff of male $j$ is given by
\[v_j (\bar x,\bar y) = -\delta y_j\sum_i m_{ij}x_i = -\delta y_j\cdot (\bar x M)_j.\]

Thus, up to truncation (which ensures that payoffs are bounded in $[-1,1]$), every player plays matching pennies against each player of the opposite gender. Each player uses the same coin in all their matching pennies games, and the parameter $m_{ij}$ dictates which player wants to match in the game between female $i$ and male $j$. Notice that the Lipschitz constant in every such game is at most $2\delta$. 
Also, from the definitions of the untruncated payoff it follows that
\begin{equation}\label{zerosum}
\sum_i u_i(\bar x,\bar y)=-\sum_j v_j(\bar x,\bar y)
\end{equation}
for every profile $(\barx,\bary)$.

It is helpful to think about the game as played on a $k\times k$ array of lights, each can be either on or off, with switches for each row and each column, such that if a switch of a row or column is pulled then all the lights in that row or column are switched (from on to off or from off to on). Every female player controls a row switch and every male player controls a column switch. The purpose of a female players is to have as many lights `on' in her row, and the purpose of a male player is to have as many lights `off' in his column. In a similar framework, Gale and Berlekamp introduced a two-player zero-sum game with perfect information, where Player 1 chooses the initial configuration of the lights, and then Player 2 chooses which switches to pull. The goal of player 2 is to minimize the number of lights.

The following lemma says that there exists some initial configuration of the lights such that whatever switches are performed on the rows, many columns will be unbalanced (i.e. will have much more lights on than off or vice versa).
\begin{lemma}\label{thelemma}
For sufficiently large $k$ there exists $k\times k$ matrix $M$ with entries in $\{+1,-1\}$ such that
\begin{equation}\label{such-that-lemma}
\# \left\{ 1\le j \le k ~:~  | (\bar{x}M)_j | > \frac{\sqrt{k}}{20} \right\}  > k/3
\end{equation}
for every row vector $\bar x$ of length $k$ with entries in $\{+1,-1\}$.
\end{lemma}
\begin{proof}[Proof of Theorem~\ref{no}]By Lemma~\ref{thelemma}
for every sufficiently large $k$ and $\delta = 20/\sqrt{k} < 30/\sqrt{n}$ there exits a $k\times k$ matrix $M$ with the property
\begin{equation}\label{such-that-thm}
\#\left\{ 1\le j \le k ~:~  | (\bar{x}M)_j | > \frac{1}{\delta} \right\}  > k/3
\end{equation}
for every strategy profile $\bar x$ of the females.

We claim that if $M$ satisfies~(\ref{such-that-thm}) then the game admits no $1/3$-equilibrium. Indeed, fix a strategy profile $\bar x$ for the females and let $\bar y$ be a profile such that all the males play $1/3$--best response to $\bar x$. Since by changing his strategy a player inverts the sign of his untrancated payoff, it follows that $v_j\ge -1/6$ for every male $j$. In particular, $v_j\ge 1$ whenever $|v_j|\ge 1$. 
Therefore, by (\ref{such-that-thm}) it follows that
\[\sum_j v_j(\bar x,\bar y) > k/3 * 1 + (2k/3) * (-1/6) > k/6.\]
By (\ref{zerosum}), it follows that $u_i(\bar x,\bar y) < -1/6$ for some female $i$. Since every player inverts the sign of her payoff by changing strategy it follows that female $i$ does not $1/3$ best-respond to $\bar y$.\end{proof}
The proof of Lemma~\ref{thelemma} uses the probabilistic method. Alon and Spencer \cite[Section 2.5]{Alo-Spe-08} use the probabilistic method to prove that for every initial configuration it is possible to switch lights to unbalance the matrix. We turn the probabilistic method `on its head' to prove that there exists some initial configuration for which any switching will result in an unbalanced matrix. The argument follows the proof of lower bound in the classical discrepancy problem~\cite[Section 13.4]{Alo-Spe-08}.
\begin{proof}[Proof of Lemma~\ref{thelemma}]
Fix $k$ and let $M_{k\times k} = \left\{m_{ij}\right\}$
where $m_{ij}$ are independent random signs. For a fixed
$\bar x$, the entries of $z=\bar x \cdot M$ are $i.i.d$,
and each $z_j$ is distributed like the sum of $k$
independent random signs. Thus, by 
the central limit theorem
$$\bbp\left(|z_j| \le \frac{\sqrt k}{20} \right)\xrightarrow{k\rightarrow\infty}\frac{1}{\sqrt{2\pi}}\int_{-1/20}^{1/20}e^{-\tau^2/2}\textd\tau<1/25$$

Fix a row vector $\bar x$ of length $k$ with entries in $\{+1,-1\}$ and let $E_{\bar{x}}$ be the event that (\ref{such-that-lemma}) is not satisfied. Then $E_{\bar{x}}$ is the event that there are more than $2k/3$ successes in $k$ independent trials with probability of success smaller than $1/25$. From Chernoff inequality \cite[Theorem A.1.4]{Alo-Spe-08} we get
\[\bbp(E_{\bar{x}})\leq \exp \left(-2k \left(2/3- 1/25\right)^2 \right) < 1/2^k.\]
Since there are only $2^k$ possible $\bar{x}$-s, it follows that $P\left(\cup_{\bar{x}} E_{\bar{x}} \right) < 1$. Therefore, for some choice of $M$ none of the $E_{\bar{x}}$ obtains, as desired.
\end{proof}

\section{Large strategy sets}\label{sec-strategy}
Proposition~\ref{sure} establishes a trivial existence result where the bound on the Lipschitz constant is independent of the number of strategies in the game. Proposition~\ref{no-strategy} below shows that for games with unbounded strategy sets this bound is the best possible (up to a constant).
\begin{proposition}\label{no-strategy}
For every even $n$ there is a game in $L(n, 2^{n/2}, 1/n)$ with no pure $1/8$--equilibrium.
\end{proposition}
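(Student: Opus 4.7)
The plan is to build a zero-sum game in which every pair of players from opposite halves plays a ``diluted'' matching pennies, exploiting the exponentially many strategies to let each player act independently against each opponent on the other side. I would partition the $n$ players into two halves $A=\{1,\dots,n/2\}$ and $B=\{n/2+1,\dots,n\}$, and let every player's strategy set be $\{-1,+1\}^{n/2}$, which has exactly $2^{n/2}$ elements. A strategy of player $i\in A$ is viewed as a function $s_i\colon B\to\{-1,+1\}$, and similarly $t_j\colon A\to\{-1,+1\}$ for $j\in B$. The payoffs would be
\[ f_i(s,t)=\frac{1}{2n}\sum_{j\in B} s_i(j)\,t_j(i)\quad\text{for } i\in A, \qquad f_j(s,t)=-\frac{1}{2n}\sum_{i\in A} s_i(j)\,t_j(i)\quad\text{for } j\in B, \]
so players on opposite sides play matching/mismatching pennies on the ``$ij$-coordinate,'' while players on the same side do not interact.

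Checking the Lipschitz constant is the easy step: player $i$'s payoff depends on opponent $j$ only through the single term $s_i(j)\,t_j(i)$, so a unilateral change of $t_j$ moves the payoff by at most $2/(2n)=1/n$, and opponents on $i$'s own side have no effect. Hence $\delta(G)\le 1/n$ and $G\in L(n,2^{n/2},1/n)$.

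The main step is a double-counting argument ruling out any pure $1/8$-equilibrium. Given a pure profile, set $a_{ij}=s_i(j)\,t_j(i)\in\{-1,+1\}$. Player $i\in A$ can always guarantee the maximum possible payoff $1/4$ by choosing $s_i(j)=t_j(i)$, so her best-response gain equals $\frac{1}{2n}\sum_{j\in B}(1-a_{ij})$; symmetrically, player $j\in B$'s best-response gain is $\frac{1}{2n}\sum_{i\in A}(1+a_{ij})$. Summing over all $n$ players the $a_{ij}$ terms cancel, and what remains is $\frac{1}{2n}\cdot 2\cdot(n/2)^2 = n/4$. The average best-response gain is therefore $1/4$, so some player has a profitable deviation of at least $1/4>1/8$, precluding any pure $1/8$-equilibrium.

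The only real design insight is to use the exponentially large strategy set to encode a full $\pm 1$-valued function on the opposing half, so that each player plays an independent matching-pennies bit against each opponent of the other side; once that is in place, both the Lipschitz bound and the averaging calculation are routine, and there is no serious obstacle to overcome.
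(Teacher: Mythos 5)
Your proposal is correct and is essentially identical to the paper's proof: the game you construct is the same one (your normalization $\tfrac{1}{2n}$ equals the paper's $\tfrac{1}{4k}$ with $n=2k$), and your double-counting of best-response gains is just a rephrasing of the paper's argument that every player can guarantee $1/4$ while the game is zero-sum, so not all players can simultaneously be within $1/8$ of their guarantee.
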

To prove the proposition we construct another `mass version' of matching pennies. Again we divide the players into groups of females and males, but this time every player has a coin for each of her/his opponents. The strategy of every player encodes all their coins.
\begin{proof}
Let the number of players be even $n=2k$, and let the strategy set of each player be $\{+1,-1\}^k$. 
We divide the players into two groups, females and males, and denote their strategy profiles by $\bar x=(x_1,\dots,x_k)$ and $\bar y = (y_1,\dots,y_k)$ respectively. The strategy of female $i$ is given by the vector $\left(x_i[j]\right)_{j=1}^k$, and similarly the strategy of male $j$ is given by the vector $\left(y_j[i]\right)_{i=1}^k$.

The payoff to female $i$ is
\[u_i(\bar x,\bar y)=\frac{1}{4k}\sum_j x_i[j]\cdot y_j[i],\]
and the payoff to male $j$ is
\[v_j(\bar x,\bar y)=-\frac{1}{4k}\sum_i  x_i[j]\cdot y_j[i].\]
The game has Lipschitz constant $2/4k=1/n$ but no pure $1/8$--equilibrium: For every strategy profile of the opponents, every player can guarantee $1/4$. Therefore, in every $1/8$--equilibrium every player should get at least $1/8$. But this is impossible since the sum of all the players' payoffs is $0$ in every profile.
\end{proof}




\section{Anonymous games}\label{sec-anonymous}

A game $G$ is \emph{anonymous} if all the players have the same strategy set and the payoff to each player $i$ is not changed when two other players $j$ and $k$ exchange the strategies they play. In this section we show that for the class of anonymous games a much stronger result than Theorem \ref{yes} holds (assuming a fixed number of strategies). Namely, the Lipschitz constant required to guarantee existence of pure $\ep$-equilibrium is independent of the number of players in the game:
\begin{theorem}\label{thm-anonymous}
Let $\ep>0$ and $m,n\in \mathbb{N}$. Then every anonymous game in $L(n,m,\delta)$ for $\delta = \ep/2m$ admits a pure $\ep$--equilibrium.
\end{theorem}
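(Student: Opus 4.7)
The plan is to follow the approach suggested by David Schmeidler in the acknowledgement, combining Kakutani's theorem with the Shapley-Folkman theorem (for which Ezra Einy is credited). Let $A$ denote the common strategy set, set $m=|A|$, and write the anonymous payoffs as $f_i(a_i,a_{-i})=g_i(a_i,e(a_{-i}))$, where $e(a_{-i})\in\Delta(A)\subset\bbr^m$ is the empirical distribution of the opponents' strategies. The Lipschitz hypothesis says that $g_i(a_i,\cdot)$ changes by at most $\delta$ under one entity swap in an $(n-1)$-population, and by a standard Lipschitz extension (e.g.\ McShane) I extend each $g_i(a_i,\cdot)$ to a function on the whole simplex $\Delta(A)$ that is $\delta(n-1)/2$-Lipschitz in the $\ell_1$ metric.

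Let $\mathrm{BR}_i(e)=\argmax_{a_i\in A}g_i(a_i,e)$ and define the average best-response correspondence $\Phi(e)=\frac{1}{n}\sum_{i=1}^n \mathrm{conv}(\mathrm{BR}_i(e))\subseteq\Delta(A)$. It is nonempty, convex-valued and upper hemicontinuous (using continuity of the extended $g_i$ and Berge's maximum theorem), so Kakutani's theorem produces a fixed point $e^*\in\Phi(e^*)$; that is, there exist $q_i\in\mathrm{conv}(\mathrm{BR}_i(e^*))$ with $\sum_i q_i=ne^*$. Now I apply the Shapley-Folkman theorem in the $(m-1)$-dimensional affine subspace of $\bbr^m$ containing $\Delta(A)$: this yields a new decomposition $ne^*=\sum_i b_i$ with $b_i\in\mathrm{conv}(\mathrm{BR}_i(e^*))$ for all $i$, and $b_i$ equal to a pure element of $\mathrm{BR}_i(e^*)$ for all but at most $m-1$ indices. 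I define a pure profile $a^*$ by setting $a_i^*=b_i$ whenever $b_i$ is already pure, and otherwise letting $a_i^*$ be any element of $\mathrm{BR}_i(e^*)$; in every case $a_i^*\in\mathrm{BR}_i(e^*)$.

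To finish, I verify the $\ep$-equilibrium property. Let $I=\{i:b_i\text{ is not pure}\}$, so $|I|\le m-1$, and for each $i\in I$ the vectors $a_i^*$ and $b_i$ differ in $\ell_1$ by at most $2$. Hence $\|ne(a^*)-ne^*\|_1\le 2(m-1)$, and removing a single player yields $\|N(a_{-i}^*)-(n-1)e^*\|_1\le 2m$, where $N(\cdot)$ denotes the opponents' count vector. The Lipschitz extension then gives $|g_i(a_i,e(a_{-i}^*))-g_i(a_i,e^*)|\le\delta m=\ep/2$ for every $a_i\in A$. Combined with $a_i^*\in\mathrm{BR}_i(e^*)$, for any deviation $d\in A$,
\[ g_i(d,e(a_{-i}^*))\le g_i(d,e^*)+\ep/2\le g_i(a_i^*,e^*)+\ep/2\le g_i(a_i^*,e(a_{-i}^*))+\ep, \]
so $a^*$ is a pure $\ep$-equilibrium. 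The main delicate points I expect to need care are (i) extending $g_i$ to all of $\Delta(A)$ without losing the Lipschitz constant, so that Kakutani operates on a clean continuous correspondence, and (ii) using the sharp $(m-1)$-dimensional Shapley-Folkman bound, since the weaker bound of $m$ exceptions would yield only a $(1+1/m)\ep$-equilibrium rather than an $\ep$-equilibrium.
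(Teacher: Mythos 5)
Your proof is correct, and it follows the same overall strategy as the paper's: reformulate the anonymous payoffs as functions of the opponents' strategy distribution, extend them to the whole simplex by a McShane-type formula that preserves the Lipschitz constant (the paper uses exactly the extension $F_i(s,x)=\max_d\{F_i(s,d)-\delta\|d-x\|_1/2\}$ you allude to), obtain an equilibrium-like object for the continuous relaxation via a fixed-point theorem, purify it with the Shapley--Folkman theorem using the sharp $m-1$ exceptional-index bound, and close with the same $2(m-1)+2=2m$ arithmetic yielding exactly $2m\delta=\ep$; your two flagged delicate points are precisely the ones the paper leans on. The one genuine difference is the middle step. The paper builds an auxiliary $n$-player game in which each player's strategy set is the simplex and the payoff $R_i(p)=\sum_{s}p_i[s]\,F_i\bigl(s,\sum_{j\neq i}p_j\bigr)$ is linear in own strategy, then invokes Nash existence for that game, so each player's support lies in $\argmax_s F_i\bigl(s,\sum_{j\neq i}\bar p_j\bigr)$ --- a best response to her \emph{own opponents'} aggregate. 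You instead apply Kakutani once to the mean-field correspondence $\Phi(e)=\frac1n\sum_i \mathrm{conv}(\mathrm{BR}_i(e))$ on the $(m-1)$-dimensional simplex, so every player best-responds to a single common aggregate $e^*$ that includes a $1/n$ share of herself. This works because you correctly absorb the self-inclusion discrepancy into the same bound, $\|N(a^*_{-i})-(n-1)e^*\|_1\le 2(m-1)+\|\bfone^{a_i^*}-e^*\|_1\le 2m$, so no constant is lost. What each version buys: your fixed point lives in dimension $m-1$ rather than in a product of $n$ simplices, which is more economical and makes the aggregate structure explicit; the paper's auxiliary game makes the best-response/support property an immediate consequence of linearity plus off-the-shelf Nash existence, with no need to verify upper hemicontinuity of a convexified best-response correspondence by hand.
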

\begin{corollary}
Fix $m$ and let $\delta:\bbn\rightarrow [0,1]$ be such that $\delta(n)=o(1)$. Then for every $\ep>0$, there is $N$ such that every game with $n\ge N$ players, $m$ strategies for each player and Lipschitz constant smaller than $\delta(n)$ admits a pure $\ep$-equilibrium.\end{corollary}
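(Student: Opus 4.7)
The plan is to combine Nash's theorem for continuous games with the Shapley-Folkman theorem (matching the acknowledgements to Einy and Schmeidler). Without loss of generality I assume all players share a common strategy set $A_0=\{1,\dots,m\}$; by anonymity each payoff takes the form $f_i(a_i,a_{-i})=h_i(a_i,N(a_{-i}))$, where $N(a_{-i})\in\bbz_{\ge 0}^m$ is the count vector of the opponents' strategies, summing to $n-1$. Since switching one opponent from pure strategy $k$ to $k'$ moves $N$ by $\ell_1$-distance $2$, the hypothesis $\delta(G)\le\delta$ is equivalent to saying that each $h_i(s,\cdot)$ is $(\delta/2)$-Lipschitz in the $\ell_1$ metric on the discrete set of integer count vectors summing to $n-1$. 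The first step is to extend each $h_i(s,\cdot)$ to all of $\Sigma:=\{x\in\bbr_{\ge 0}^m:\sum_k x_k=n-1\}$ via the McShane--Whitney extension theorem, preserving the Lipschitz constant; call the extensions $\tilde h_i$.

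The heart of the argument is an auxiliary continuous game in which each player's action set is the probability simplex $\Delta:=\Delta(A_0)\subseteq\bbr^m$ and the payoff to $i$ at a profile $(\pi_j)_j$ is $u_i(\pi):=\sum_{s\in A_0}\pi_i(s)\,\tilde h_i\bigl(s,\sum_{j\ne i}\pi_j\bigr)$. Since $u_i$ is continuous in $\pi$ and linear (hence concave) in $\pi_i$, Nash's theorem produces an equilibrium $(\pi_i^\ast)$. Writing $\tau_i:=\sum_{j\ne i}\pi_j^\ast\in\Sigma$, the equilibrium condition forces each $\pi_i^\ast$ to be supported on the best-response set $B_i:=\argmax_{s\in A_0}\tilde h_i(s,\tau_i)$.

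Purification is then handled by Shapley-Folkman. Since each $\mathrm{conv}(B_i)\subseteq\Delta$ lies in an $(m-1)$-dimensional affine subspace of $\bbr^m$, the Shapley--Folkman theorem yields a decomposition $\sum_i\pi_i^\ast=\sum_i z_i$ with $z_i\in\mathrm{conv}(B_i)$ and at most $m-1$ of the $z_i$ failing to be extreme points. For each ``good'' index $i$ set $a_i^\ast\in B_i$ to be the pure strategy with $e_{a_i^\ast}=z_i$, and for the at most $m-1$ ``bad'' ones pick any $a_i^\ast\in B_i$. Only bad indices contribute a discrepancy of $\ell_1$-size at most $2$ each, so the pure profile $a^\ast$ satisfies $\|N(a^\ast)-\sum_i\pi_i^\ast\|_1\le 2(m-1)$, and peeling off the $i$-th coordinate gives $\|N(a_{-i}^\ast)-\tau_i\|_1\le 2m$ for every player $i$.

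To finish, for any player $i$ and alternative $s\in A_0$ the Lipschitz estimate gives
\[
f_i(s,a_{-i}^\ast)\le\tilde h_i(s,\tau_i)+m\delta\le\tilde h_i(a_i^\ast,\tau_i)+m\delta\le f_i(a_i^\ast,a_{-i}^\ast)+2m\delta,
\]
where the middle inequality uses $a_i^\ast\in B_i$ and the two outer ones use that $\tilde h_i$ is $(\delta/2)$-Lipschitz in $\ell_1$. With $\delta=\ep/(2m)$ this is exactly a pure $\ep$-equilibrium, and the corollary follows by taking $N$ large enough that $\delta(n)<\ep/(2m)$. The main technical obstacle is the clean set-up of the Lipschitz extension $\tilde h_i$ and verifying that the auxiliary continuous game has a Nash equilibrium; once those are in place, the Shapley-Folkman bookkeeping and the final Lipschitz estimate are routine.
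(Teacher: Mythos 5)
Your proposal is correct and takes essentially the same route as the paper: the corollary is an immediate consequence of Theorem~\ref{thm-anonymous} (whose anonymity hypothesis is implicit in the corollary's statement, as you rightly inferred --- without it the claim would contradict Theorem~\ref{no}), and your argument reproduces the paper's proof of that theorem step by step --- the count-vector representation, the Lipschitz extension to the simplex (your McShane--Whitney extension is exactly the explicit $\max$ formula the paper writes down), the auxiliary continuous game with payoffs linear in a player's own mixed strategy, and the Shapley--Folkman purification yielding the same $2m\delta$ bound and the same threshold $\delta=\ep/2m$. The only difference is organizational: the paper cites Theorem~\ref{thm-anonymous} and then picks $N$ with $\delta(n)<\ep/2m$, whereas you re-derive the theorem in full.
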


Before the proof we note that similar results to that of Theorem \ref{thm-anonymous} appeared in previous papers: Rashid \cite{Ras-83} was the first to prove an asymptotic (in the number of players) purification result for anonymous games; our proof follows the footsteps of his, the main difference being that we start with finite sets of strategies as a primitive while in Rashid's formulation the strategy set of each player is the unit simplex. See also \cite{Car-04}, \cite[Lemma 5]{Car-Pod-09} and \cite{Car-Woo-09} for related results.

Daskalakis and Papadimitriou \cite[Theorem 2.1]{Das-Pap-07} proved that
every anonymous game in $L(n,m,\delta)$ for $\delta=\ep/O(m^2)$ admits pure $\ep$--equilibrium, and conjectured that the $O(m^2)$ can be improved to $O(m)$; Theorem \ref{thm-anonymous} confirms their conjecture. The difference between our proof and theirs can be roughly described as follows: Our approach is to build an auxiliary game in which the strategy set of each player is extended to the entire unit simplex, use Nash's Theorem to obtain an equilibrium of the auxiliary game, and then use the Shapley-Folkman theorem to ``purify'' the equilibrium without changing the utilities too much. Daskalakis and Papadimitriou, on the other hand, extend the best-response function to a continuous function on the simplex, use Brower's theorem to obtain a fixed point of this function, and then use Hall's marriage lemma for the ``purification'' procedure.
\begin{proof}[Proof of Theorem~\ref{thm-anonymous}]
We denote by $S$ the common set of strategies, so that $|S|=m$ and $A=S^n$. We consider the vector space $\bbr^S$ equipped with the standard basis $\{\bfone^s|s\in S\}$ and the $L_1$ norm $\| \cdot\|_1$. Let $\Delta_{n}(S)$ be the set of all elements $\{x[s]\}_{s\in S} \in\bbr^S$ such that $x[s]\geq 0$ for every $s\in S$ and $\sum_s x[s]=n$. Thus, $\Delta_n(S)$ is an $m-1$ dimensional simplex in $\bbr^S$, whose vertices are $n\bfone^s$. Elements of $\Delta_n(S)$ with integral entries are called \emph{distributions}. The set of all distributions in $\Delta_n(S)$ is denoted $D_n(S)$. One can think about $D_n(S)$ as the set of possible ways to distribute $n$ identical balls in $m$ cells.


Fix a strategy profile $a=(s_1,\dots,s_n)\in A$ and a player $i$. Let $d(a_{-i})=\sum_{j\neq i}\bfone^{s_j} \in D_{n-1}(S)$ be the \emph{distribution induced by $a_{-i}$}. Since the game is anonymous, if $d(a_{-i})=d(a'_{-i})$ then player $i$ gets the same payoff against $a_{-i}$ and $a'_{-i}$. Thus, we can describe the game by payoff functions $\{F_i:S\times D_{n-1}(S) \rightarrow \mathbb{R}\}_{i=1}^n$, where $F_i(s,d)$ is the payoff for player $i$ under a strategy profile in which player $i$ plays $s$ and the distribution of her opponent's strategies is $d$.


If the Lipschitz constant of an anonymous game is $\delta$ then the payoff functions satisfy the following Lipschitz property:
\begin{equation}\label{liphshitz-capital-f}
|F_i(s,d)-F_i(s,d')|\leq \frac{\delta \|d-d'\|_1}{2}
\end{equation}
for every $s\in S$ and $d,d'\in D_{n-1}(S)$. We extend the domain of each $F_i(s,\cdot)$ from $D_{n-1}(S)$ to $\Delta_{n-1}(S)$, so that the Lipschitz property is preserved, for example by defining
\[F_i(s, x)=\max_{d\in D_{n-1}(S)} \left\{F_i(s,d)-\delta\|d-x\|_1/2 \right\} \]
for every $x\in \Delta_{n-1}(S)$.

Consider now the auxiliary $n$-player game in normal form where the strategy set of each player is $\Delta_1(S)$ and the payoff for player $i$ under strategy profile $p=(p_1,\dots,p_n)$ is given by
\[R_i(p)=\sum_{s\in S}p_i[s] F_i\left(s,\sum_{j\neq i}p_j\right).\]
The strategy set of every player is compact and convex, the payoff functions are jointly continuous, and each function is linear in a player's own strategy. Therefore the game admits a Nash equilibrium $\bar p=(\bar p_1,\dots,\bar p_n)$. Because of the linearity in own strategy, the equilibrium property implies that
\begin{equation}\label{equilibrium} \support (\bar p_i)\subseteq\argmax_{s\in S} F_i\left(s,\sum_{j\neq i}\bar p_j\right)\end{equation}
for every player $i$.

By the Shapley-Folkman Theorem (e.g., \cite{Zho-93}), we can write $\sum_i \bar p_i = \sum_{i\in E} q_i + \sum_{i\in E^c} \bfone^{\bar s_i}$, where $E$ is a set of $m-1$ players, $q_i\in\Delta_1(S)$ for each $i\in E$ and $\bar s_i\in \support(\bar p_i)$ for each $i\in E^c$. By replacing each $q_i$, $i\in E$, by some $\bar s_i\in \support (\bar p_i)$ we get a pure strategy profile $\bar a=(\bar s_1,\dots,\bar s_n)$ such that $\left\|\sum_i \bfone^{\bar s_i}-\sum_i\bar p_i \right\|_1 \leq 2(m-1)$. In particular, it follows that
\begin{equation}\label{sf-opponents}
\left\|\sum_{j\neq i} \bfone^{\bar s_j}-\sum_{j\neq i}\bar p_j \right\|_1\leq 2m\end{equation}
for every player $i$.

We claim that the strategy profile $\bar a$ is a $2m\delta$-equilibrium. Indeed, for every player $i$ and every strategy $s\in S$,
\begin{eqnarray*}
f_i(s,\bar a_{-i}) & = & F_i \left(s,\sum_{j\neq i}\bfone^{\bar s_j} \right) \leq F_i \left(s,\sum_{j\neq i}\bar p_j \right) + m\delta \leq  F_i \left(\bar s_i,\sum_{j\neq i}\bar p_j \right) + m\delta \leq \\ & &   F_i \left(\bar s_i,\sum_{j\neq i}\bfone^{\bar s_j} \right) + 2m\delta  = f_i(\bar s_i,\bar a_{-i})+2m\delta.
\end{eqnarray*}
The equalities follow from the definition of $F_i$. The first inequality follows from~\eqref{liphshitz-capital-f} and~\eqref{sf-opponents}, the second from the fact that $\bar s_i\in\support (\bar p_i)$  and~\eqref{equilibrium}, and the third again from~\eqref{liphshitz-capital-f} and~\eqref{sf-opponents}.
\end{proof}

As a last remark in this section we note that the self-purification argument in the proof of Theorem \ref{yes} fails when the Lipschitz constant of the game is not sufficiently small, \emph{even if the game is anonymous}. This is illustrated in the following example.
\begin{example}
Consider a town with $2n+1$ individuals (players) and one restaurant. Each player should decide between going out for dinner in the restaurant (strategy $R$) or cook her own dinner at home ($H$). Players like the restaurant to be populated but not crowded. Specifically, the payoff to a player that chooses $R$ when $k$ out of her $2n$ opponents also choose $R$ is given by
$$ g(k)=\begin{cases}1, &\text{ if }|k-n|\leq 0.477\sqrt{n}\\
\Bigl(1 - \delta\bigl(|k-n|-0.477\sqrt{n}\bigr)\Bigr)^+,&\text{ otherwise,}\end{cases}$$
where $\delta > 0$ is a small number. A player that chooses to stay home ($H$) gets a fixed payoff of $E(g(X))$ (independently of the choices of other players), where $X$ is a random variable with binomial distribution $X\sim Bin (2n,1/2)$.
\end{example}
First, notice that this is an anonymous game (actually this game is even symmetric in the sense that all the players have the same payoff function). Second, the Lipschitz constant of this game is $\delta$. Third, the strategy profile $\mu$ in which all players mix between $R$ and $H$ with equal probabilities is an equilibrium.

The function $g$ was chosen such that, when $n$ is large, $E(g(X))$ is approximately 1/2; this follows from the central limit theorem. Thus, each player can guarantee a payoff close to 1/2 by staying at home. It follows that, for $\ep=1/4$, a strategy profile is an $\ep$-equilibrium if and only if the number of players $k$ that choose $R$ is either zero or in the interval where $g(k)$ is close to 1/2. But when the equilibrium $\mu$ is played the probability of this event  diminishes when $n$ increases. Thus, the random realized profile will typically not be a $1/4$-equilibrium.

\appendix
\section{From pure $\ep$-equilibrium to exact mixed equilibrium}\label{sec-converse}

Theorem \ref{yes} states that if the Lipschitz constant of a game is small enough then pure $\ep$-equilibrium exists. This result is an assertion about relationships between linear inequalities: It says that if a certain finite number of linear inequalities about the entries of a payoff matrix (inequalities that assert that the Lipschitz constant is smaller than $\delta$) are satisfied, then there is a strategy profile for which another finite number of linear inequalities (inequalities that assert that this profile is an $\ep$-equilibrium) is satisfied. More formally, Theorem \ref{yes} is a first order sentence in the language of real numbers with addition (without multiplication)\cite{TLTC-10}.

However, the proof of Theorem \ref{yes} is based on Nash's theorem of existence of mixed equilibrium, and therefore on Brouwer's fixed point theorem. It may seem dubious that an appeal to such powerful theorems is required (note that the much more trivial Proposition \ref{sure} clearly does not rely on a fixed point argument). To show the non-triviality of the bound on the Lipschitz constant in Theorem~\ref{yes}, we show below that in fact Theorem~\ref{yes} also implies Nash's theorem via an elementary argument. A similar argument appears in Schmeidler \cite[pp. 298-299]{Sch-73} for the non-atomic setup.
\begin{claim*}Assume Theorem~\ref{yes}. Then every finite normal-form game admits a mixed strategy Nash equilibrium.\end{claim*}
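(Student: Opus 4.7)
The plan is to derive Nash's theorem by applying Theorem~\ref{yes} to a family of ``replicated'' games obtained from the given finite game $G$. Specifically, I would construct, for each integer $N$, an auxiliary game $G_N$ in which each player $i$ of $G$ is replaced by $N$ clones, all sharing the strategy set $A_i$; the payoff of a clone of player $i$ who plays $a\in A_i$ depends on the empirical distributions $\mu_j$ of the clones in each group $j\ne i$, and is set equal to
\[ \bbe_{\tau_{-i}\sim \prod_{j\ne i}\mu_j} f_i(a,\tau_{-i}) .\]
This definition makes a clone's payoff insensitive to what other clones in her own group play, and changing the strategy of one opponent in another group $j$ shifts $\mu_j$ by one atom of mass $1/N$. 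So, assuming (WLOG by rescaling) payoffs of $G$ lie in $[0,1]$, the Lipschitz constant of $G_N$ is at most $1/N$ (or $2/N$ with payoffs in $[-1,1]$).

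Next, I would invoke Theorem~\ref{yes}: the game $G_N$ has $nN$ players and at most $m=\max_i|A_i|$ strategies per player, so for any fixed $\ep>0$ and all sufficiently large $N$ the inequality $1/N \le \ep/\sqrt{8\,nN\log(2m\,nN)}$ is satisfied, and hence $G_N$ admits a pure $\ep$-equilibrium $(a^\ast_p)_p$. Let $\mu^\ast_i \in \Delta(A_i)$ be the empirical distribution of strategies in group $i$. The $\ep$-equilibrium condition for a clone of $i$ playing $s\in \support(\mu^\ast_i)$ translates directly, by construction of the payoffs, into
\[\bbe_{\tau_{-i}\sim\prod_{j\ne i}\mu^\ast_j} f_i(s,\tau_{-i}) \ge \bbe_{\tau_{-i}\sim \prod_{j\ne i}\mu^\ast_j} f_i(b,\tau_{-i}) - \ep\]
for every $b\in A_i$. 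Thus every strategy in $\support(\mu^\ast_i)$ is an $\ep$-best response to $\mu^\ast_{-i}$ in the original game $G$, and so $(\mu^\ast_1,\dots,\mu^\ast_n)$ is a mixed $\ep$-Nash equilibrium of $G$.

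Finally, letting $\ep=\ep_k\to 0$ and selecting a corresponding mixed $\ep_k$-equilibrium $\mu^{(k)}$ in the compact space $\prod_i \Delta(A_i)$, I would pass to a convergent subsequence. Because each player's expected payoff is jointly continuous in mixed strategy profiles, the limit is an exact mixed Nash equilibrium of $G$, which is the desired conclusion.

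The only real obstacle is verifying that the cross-group influence in $G_N$ is genuinely $O(1/N)$ after the expectation is taken; this is an elementary calculation (changing one coordinate of one product measure shifts the expectation of a $[0,1]$-valued function by at most the change in that coordinate's total variation). Once this Lipschitz estimate is in hand, the rest is a routine compactness argument, and the construction confirms that Theorem~\ref{yes}, despite being a purely first-order statement about linear inequalities over the payoff entries, encodes the full content of Nash's theorem.
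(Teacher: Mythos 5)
Your proposal is correct and is essentially the paper's own argument: your clone game $G_N$, with a clone's payoff defined as the expectation of $f_i$ under the product of the other groups' empirical distributions, is exactly the paper's replicated game $G'$ in which every $n$-tuple of clones (one per group) plays $G$ and payoffs are averaged, and both proofs bound the Lipschitz constant by $O(1/N)$, apply Theorem~\ref{yes}, read off the group empirical distributions as a mixed $\ep$-equilibrium, and take an accumulation point as $\ep\to 0$.
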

\begin{proof}[Sketch of Proof]
Fix a normal form game $G$ with $n$ players and strategy sets $A_1,\dots,A_n$. Let $m$ be such that $|A_i|\leq m$ for every $i$, and fix $\ep>0$. Let $L$ be a sufficiently large integer and consider the game $G'$ with $n\cdot L$ players divided into groups $(T_1,\dots,T_n)$ of size $L$ each, where players in $T_i$ have strategy sets $A_i$. In the game $G'$ every $n$-tuple of players $(t_1,\dots,t_n)$ where $t_i\in T_i$ play the game $G$, and each player must use the same strategy in all the games in which he participates. The payoff to a player is the average of all the payoffs he receives. If $\delta$ is the Lipschitz constant of the original game $G$, then the Lipschitz constant of $G'$ is at most $\delta/L$. Thus, the game $G'$ has $nL$ players and for sufficiently large $L$ its Lipschitz constant is smaller than $\ep/\sqrt{8nL\log (2mnL)}$, and therefore by Theorem~\ref{yes} $G'$ admits a pure $\ep$-equilibrium. If $\mu_i\in\Delta(A_i)$ is the distribution of strategies played by players in $T_i$ according to the pure $\ep$-equilibrium profile of $G'$, then $(\mu^1,\dots,\mu^n)$ is a mixed $\ep$-equilibrium in $G$. Thus, we proved that $G$ admits a mixed $\ep$-equilibrium for every $\ep>0$. An accumulation point of these $\ep$-equilibria is a mixed Nash equilibrium of $G$.\end{proof}
\bibliographystyle{plain}
\bibliography{large}

\end{document}